\renewcommand{\Re}{{\operatorname{Re}\,}}
\renewcommand{\Im}{{\operatorname{Im}\,}}
\newcommand{\Op}{\operatorname{Op}}
\renewcommand{\epsilon}{\varepsilon}
\newcommand{\N}{{\mathbb N}}
\newcommand{\R}{{\mathbb R}}
\newcommand{\C}{{\mathbb C}}
\newcommand{\Z}{{\mathbb Z}}
\newcommand{\lan}{\left\langle}
\newcommand{\ran}{\right\rangle}
\newcommand{\mc}[1]{\mathcal{#1}}
\newcommand{\e}{\epsilon}
\newcommand{\re}{\mathbb{R}}
\newcommand{\Cc}{C_c^\infty}
\newcommand{\Id}{\operatorname{Id}}
\newcommand{\dbar}{\bar\partial}
\newcommand{\supp}{{\operatorname{supp\,}}}
\renewcommand{\phi}{\varphi}
\newcommand{\ep}{\varepsilon}
\newtheorem{theo}{{\sc Theorem}}
\newtheorem{cor}{{\sc Corollary}}[section]
\newtheorem{lem}[cor]{{\sc Lemma}}
\newtheorem{prop}[cor]{{\sc Proposition}}
\numberwithin{equation}{section}
\newcounter{remcounter}
\renewcommand{\theremcounter}{\arabic{remcounter}}
\newenvironment{rem}[2]{%
\refstepcounter{remcounter}%
\label{#1}%
\medskip\noindent\it{Remark~\theremcounter.}%
}%
{\medskip}%
\newtheorem{defn}[theo]{{\sc Definition}}
\title[Reverse Agmon estimates]{Reverse Agmon Estimates  in forbidden regions}
\author{John A. Toth and Xianchao Wu}
\address{Department of Mathematics and Statistics, McGill University, Montr\'eal, Canada}
\date{}
\begin{document}
  
\maketitle
\begin{abstract}  Let $(M,g)$ be a compact, Riemannian manifold and $V \in C^{\infty}(M; \R)$. Given a regular energy level $E > \min V$, we consider $L^2$-normalized eigenfunctions, $u_h,$ of the Schr\"{o}dinger operator $P(h) = - h^2 \Delta_g + V - E(h)$ with $P(h) u_h = 0$ and $E(h) = E + o(1)$ as $h \to 0^+.$ The well-known Agmon-Lithner estimates \cite{Hel} are exponential decay estimates (ie. upper bounds)  for  eigenfunctions in the forbidden region $\{ V>E \}.$ The decay rate is given in terms of the Agmon distance function $d_E$ associated with the degenerate Agmon metric $(V-E)_+ \, g$ with support in the forbidden region.

The point of this note is to prove a  reverse Agmon estimate (ie. exponential {\em lower} bound for the eigenfunctions) in terms of Agmon distance in the forbidden region under a  control assumption on eigenfunction mass in the allowed region $\{ V< E \}$ arbitrarily close to the caustic $ \{ V = E \}.$
We then give some applications to hypersurface restriction bounds for eigenfunctions in the forbidden region along with corresponding nodal intersection estimates.

\end{abstract}

\section{Introduction}

Let $(M,g)$ be a compact, $C^{\infty}$ Riemannian manifold and $V \in C^{\infty}(M;\R)$ be a real-valued potential. We assume that $E$ is a regular value of $V$ so that $dV |_{V=E} \neq 0$.
The corresponding classically allowed region is
\begin{equation} \label{allowed}
\Omega_E := \{ x \in M; V(x) \leq E \} 
\end{equation}\
\noindent with boundary $C^{\infty}$ hypersurface (ie. boundary caustic)
\begin{equation}\label{caustic}
\Lambda_E := \{ x \in M;  V(x) = E \}. \end{equation}
The {\em forbidden region} is the complement $\Omega_E^c = \{x \in M; V(x) >E \}.$

\subsubsection{Agmon-Lithner estimates} Let $P(h): C^{\infty}(M) \to C^{\infty}(M)$ be the Schr\"odinger operator
$$P(h):= - h^2 \Delta_g + V(x) - E(h)$$ and $u_h \in C^\infty(M)$ be $L^2$-normalized eigenfunctions with eigenvalue $E(h) = E + o(1)$ as $h \to 0^+$ so that
$P(h) u_h = 0.$ The {\em Agmon metric} associated with  $P(h)$ is defined by
$$ g_E (x) := (V(x)-E)_{+} \, g(x).$$
The degenerate metric $g_E$ is supported in the forbidden region $\Omega_E^c$ and we denote the corresponding Riemannian distance function by $d_E: \Omega_E^c \times \Omega_E^c \to \R^+.$  By a slight abuse of notation, we define the associated distance function to $\Lambda_E$ by

\begin{equation} \label{agmondistance}
d_E(x):= d_{E}(x, \Lambda_E) = \inf_{y \in \Lambda_E} d_{E}(x,y), \quad x \in \Omega_E^c.\end{equation}

It is well-known \cite{Hel}(3.2.2) that $d_E \in Lip(\Omega_E^c)$ and also,
$ |\nabla_x d_E|^2_g\leq (V(x)-E)_+ , a.e.$\\

Given an open subset, $U$,  of the forbidden region $ \Omega_E^c$ with $\overline{U} \subset \Omega_E^c,$ the {\em Agmon-Lithner estimate}  \cite{Hel} (Prop. 3.3.1) says that for any $\delta >0,$

\begin{equation} \label{agmon}
\| e^{(1-\delta) d_E/h} \, u_h \|_{H_h^1(U)} = O_{\delta}(1).\end{equation}\

\noindent where $ \| f \|_{H_h^1}^2 = \int_{U} ( |f|^2 +  |h \partial f|^2 ).$ A standard argument with Sobolev estimates  \cite{Hel} (Prop. 3.3.4)  then yields corresponding pointwise upper bounds: In terms of local coordinates, for any $\delta >0$ and multi-index $\alpha = (\alpha_1,...,\alpha_n) \in \N^n,$

\begin{equation} \label{agmonptwise}
| D_{x}^{\alpha} u_h (x)| \leq C_{\alpha,\delta} e^{-d_E(x)/h} e^{\delta/h}; \quad x \in \Omega_E^c. \end{equation} \

Such estimates have widespread  applications to tunnelling problems \cite{CS, Sim, HS1} and the theory of  Morse-Witten complexes \cite{Wit}.

Our objective here is to establish a partial  lower bound that is consistent with (\ref{agmon}) in a Fermi neighbourhood of the caustic $\Lambda_E$   (see subsection \ref{Fermi})  under a suitable control assumption on eigenfunction mass in the  allowed  region $\Omega_E.$ This is precisely the point of Theorem \ref{thm1}. We then give applications to lower bounds for $L^p$-restrictions of eigenfunctions to hypersurfaces in the forbidden region (so-called {\em goodness} estimates in the terminology of Toth and Zelditch \cite{TZ}).
Finally, we apply these rather explicit bounds to improve on the nodal intersection bounds of Canzani and Toth \cite{CT} for a large class of hypersurfaces in forbidden regions. We now describe our results in more detail.


Since by the monotonicity assumption, $\nabla V |_{V=E} \neq 0$, it follows that the caustic $\Lambda_{E} = \{ V = E \}$  is a $C^{\infty}$ hypersurface of $M$. 
Fix a constant $r_0 \in (0, \frac{ \text{inj} (M,g)}{2} )$ and let $U_E(r_0)$ be a Fermi neighbourhood  (see \cite{Gr} Chapter 2, section 2.1)   of the caustic $\Lambda_E$ of  width $ 2 r_0$ with respect to the ambient metric $g.$  We denote the  Fermi defining function $y_n: M \to \R$ with the property that $y_n >0$ in the forbidden part and $\Lambda_E = \{ y_n =0 \}.$ In terms of Fermi coordinates, the collar neighbourhood is $U_E(r_0) = \{ y ; |y_n| < r_0 \}.$ Consider an annular region in $U_{E}(r_0) \cap \{ V>E \}$ given by $A(\delta_1,\delta_2):= \{ y \in U_E(r_0);  E+ \delta_1 < V(y) < E + \delta_2 \}$ with $0 < \delta_1 < \delta_2.$ Our first result in Theorem \ref{thm1} is a partial  reverse  Agmon estimate consistent with  (\ref{agmon}). First, we introduce a {\em control } assumption on the eigenfunctions $u_h$ in the allowed region. 
\begin{defn}\label{control} We say that the eigenfunctions $u_h$ satisfy the {\em control assumption} if for every $\epsilon>0$ there exists  constants $C_N(\epsilon)>0$ and $h_0(\epsilon)>0$ so that for $h \in (0,h_0(\epsilon)],$
\begin{equation} \label{controleqn}
 \int_{ \{ E- \frac{\epsilon}{2}  \leq V(x) \leq E  \} } |u_h|^2 \, dv_g \geq C_N(\epsilon) h^N \end{equation}
for some $ N>0.$ When (\ref{controleqn}) is satisfied for a {\em fixed} $\epsilon = \epsilon_0>0,$ we say that the eigenfunction sequence satisfies the {\em $\epsilon_0$ control assumption}.
\end{defn}

Roughly speaking, the control assumption in Definition \ref{control} says that in arbitrarily small (but independent of $h$) annular neighbourhoods of the caustic in the {\em allowed} region, eigenfunctions have at least polynomial mass in $h.$


Our second assumption involves a monotonicity condition on the potential V itself in the tubular neighbourhood $U_E(r_0)$. Specifically, we make the following

\begin{defn} \label{monotonicity}
Given $r_0  \in (0, \frac{inj(M,g)}{2})$, we say that $V$ satisfies the   {\em monotonicity assumption}   in $U_E(r_0)$ provided there exists $C(r_0)>0$ such that
$$ \,\, \partial_{y_n} V(y) \geq C(r_0),\quad \,\, y \in U_E(r_0).$$
\end{defn}

We note that the control assumption is automatically satisfied in the 1D case as a consequence of the  WKB  asymptotics for the eigenfunctions. In section \ref{counterexample}, we give examples of eigenfunction sequences satisfying this condition in arbitrary dimension.

As for the monotonicity condition in Definition \ref{monotonicity},  it is not hard to show (see section \ref{collar}  (\ref{defining})) that for $r_0>0$ sufficiently small, this condition is necessarily satisfied in $U_E(r_0)$  provided  $\nabla V |_{ \{V=E\} } \neq 0$ for the energy value $E.$  We note that this condition can readily be written in a more geometrically intrinsic way in terms of the local normal foliation of the tubular neighbourhood of $\Lambda_E$ (see subsection \ref{collar}).

Then,  under the control and monotonicity assumptions  in Definitions \ref{control}  and \ref{monotonicity}, by using Carleman estimates to pass across the caustic hypersurface, in Theorem \ref{thm1} we prove that for any $\epsilon >0$ and $h \in (0,h_0(\epsilon)],$

\begin{equation} \label{upshot1}
\| e^{\tau_0 d_E/h} \, u_h \|_{H_h^1 (A(\delta_1,\delta_2))} \geq C(\epsilon,\delta_1,\delta_2) e^{-\beta(\epsilon)/h}, \end{equation}\

\noindent where $ \beta(\epsilon) = o(1)$
as $\epsilon \to 0^+$ and  

$$ \tau_0 := \Big( \frac{ \max_{y \in U_E(r_0)} |\partial_{y_n} V|}{ \min_{y \in U_E(r_0)\cap\Omega_E^c} |\partial_{y_n} V|} \Big)^{1/2}.\ $$\

\begin{rem}{rem:A0}{} \label{comments} 

 (i) The $\epsilon$-dependence on the RHS of (\ref{upshot1}) is a consequence of the control assumption on the eigenfunctions $u_h$ in Definition \ref{control}. \  

(ii) It is easy to see that the control assumption in Defintion \ref{control} is necessary in Theorem \ref{thm1} since simple  counterexamples can be constructed otherwise arising from the natural occurence of additional effective potentials upon separation of variables (see section \ref{counterexample}). \

  (iii) The control assumption in Definition \ref{control} can be weakened slightly in that the polynomial mass assumption in $h$ can be replaced by the following subexponential one: for all $\epsilon >0,$
 $$ \liminf_{h \to 0^+}  \, h \, \log \Big( \int_{ \{ E- \frac{\epsilon}{2}  \leq V(x) \leq E  \} } |u_h|^2 \, dv_g \Big) = 0.$$
It follows easily (see (\ref{UPSHOT})) that Theorem \ref{thm1} and consequently, Theorems \ref{thm2} and \ref{thm3} all hold under this slightly weaker assumption.

(iv) In the case where the eigenfunction sequence only satisfies the  $\epsilon_0$-control assumption, the lower bound in (\ref{upshot1}) is also satisfied, where the constant $\beta(\epsilon_0)>0$ appearing on the RHS of the inequality can be explicitly estimated in terms of the potential, $V$ (see Remark \ref{general}). The same is true for the subsequent results in Theorems \ref{thm2} and \ref{thm3}. \end{rem}

Clearly, the geometric constant $\tau_0 \geq 1.$  At present, we are unable to prove that (\ref{upshot1}) holds in the general setting above with optimal constant $\tau_0 =1,$  but we hope to return to this point elsewhere.\

We note that in one-dimension, both upper and lower bounds for eigenfunctions are well-known; indeed, classical WKB methods \cite{E} (Chapter 4, sections 4.3-4.7) provide explicit asymptotic formulas for  the eigenfunctions. In higher dimensions, the situation is much more complicated and, to our knowledge, the problem of establishing lower bounds has only been addressed in special cases associated with low-lying eigenvalues (i.e. semi-excited states) \cite{Hel} (section 4.4) and \cite{Sim}.
For more general eigenfunctions (eg. excited states), while the upper bounds given by the Agmon estimates are well-known, to our knowledge explicit lower bounds have not been addressed in the literature. That is the main point of Theorem \ref{thm1} above.  \

In section \ref{restriction} we use the Carleman bounds in (\ref{upshot1}) with shrinking annuli together with a Green's formula argument to get lower bounds for $L^p$ eigenfunction restrictions to hypersurfaces smoothly isotopic in $U_E(r_0) \cap \{ V>E \}$ to level sets $H = \{y_n = const.\}$ (see Definition \ref{admissible}).
In case of the level sets $H= \{y_n = const\}$, Theorem \ref{thm2} says that, under the same assumptions as in (\ref{upshot1}), for any $\epsilon >0$ and $h \in (0,h_0(\epsilon)]$ and with
$$d_E^H := \max_{y \in H} d_E(y), \,\,\,\ d_E(H):= \min_{y \in H} d_E(y),$$

\begin{equation} \label{upshot2}
\| u_h \|_{L^p(H)} \geq C(p,\epsilon) e^{- ( \, 2 \tau_0 d_E^H - d_E(H) \, ) / h} \, e^{-\beta(\epsilon)/h}, \quad p \geq 1,\end{equation}\

\noindent where $\beta(\epsilon) = o(1) \,\, \text{as} \,\, \epsilon \to 0^+.\ $ \\

The bounds in (\ref{upshot2}) are {\em goodness} estimates in the terminology of Toth and Zelditch \cite{TZ}; the key novelty here is the rather explicit geometric rate  $2 \tau_0 d_E^H - d_E(H)$ appearing in (\ref{upshot2}).


Finally, in section \ref{nodal}, we give an application of (\ref{upshot2}) to nodal intersection bounds in forbidden regions. In \cite{CT}, Canzani and Toth prove that for any $C^{\omega}$ separating hypersurface $H$ in the forbidden region, with $Z_{u_h} = \{ x \in M; \,  u_h(x) = 0 \},$
$$ \# \{ Z_{u_h} \cap H \} \leq C_H' h^{-1}.$$

While this rate in $h$ is easily seen to be sharp in general,   the constant $C_H'>0$ is not explicitly controlled in  \cite{CT}.  The lower bound in (\ref{upshot2}) allows us to give a more concrete upper bound  for $C_H'$  in the cases where $H$ is   a separating hypersurface  that is smoothly isotopic to a level set of the defining function $y_n$ in the forbidden region. This is essentially the content of Theorem~ \ref{thm3}. 

Finally, we note that while all results are stated here for compact manifolds, the results in Theorems \ref{thm1}-\ref{thm3} extend to the case of Schr\"odinger operators on $\R^n$ and the proofs are the same.

\noindent{\em Acknowledgements:} We would like to thank Jeff Galkowski, Andreas Knauf  for many helpful discussions. We also thank Stephane Nonnenmacher and the referees for detailed comments regarding earlier versions of the paper.

\section{Carleman estimates in a Fermi neighbourhood of the caustic} \label{Fermi}









\subsection{Collar neighbourhood of caustic and Fermi coordinates} \label{collar}

For background on the construction of Fermi coordinates in a tubular neighbourhood of any embedded compact submanifold $P \subset M,$ we refer the reader to \cite{Gr} Chapter 2 (see also \cite{HHHZ} section 2). Here, given a compact Riemannian manifold $(M,g)$, we are interested in the special case where $P$ is a $C^{\infty}$ compact hypersurface; specifically, $P= \Lambda_E = \{ V=E \}.$ 
Let $\nu \in N \Lambda_E$ be the unit, oriented vector field normal to $\Lambda_E.$ Then, by the tubular neighbourhood theorem, there exists an open neighbourhood, $U_E(r_0),$ of $\Lambda_E$ such that
$$ \Lambda_E \times (-r_0, r_0) \ni (y',t) \mapsto \exp_{y'}(t \nu) \in U_E(r_0)$$
is a $C^{\infty}$ diffeomorphism provided $r_0 < inj(M,g)/2,$ where $\exp$ denotes the geodesic exponential map in the metric $g.$ Then, the Fermi defining function $y_n: U_{E}(r_0) \to (-r_0, r_0)$ is given by 
$$y_n (\exp_{y'}(t \nu)) = t; \quad t \in (-r_0,r_0).$$
Writing $\gamma_{\nu}(t,y'): = \exp_{y'}(t \nu),$ it then follows that the monotonicity condition in Definition \ref{monotonicity} can be rewritten in the somewhat more geometric form
\begin{equation} \label{monotone2}
\langle \nabla V, \dot{\gamma_{\nu}}(t,y') \rangle_g \geq C_0 >0, \quad (t,y') \in (-r_0,r_0) \times \Lambda_E. \end{equation}

 
 We will assume from now on that the monotonicity assumption in Definition \ref{monotonicity} (or equivalently, (\ref{monotone2})) is satisfied (see Remark \ref{wlog}).    In the following, we identify points $y' \in \Lambda_E$ with their local coordinate representations in $\Lambda_E.$ Then, in terms of   Fermi coordinates $(y', y_n)$ in the ambient metric $g,$ we have 

$$ g = dy_n^ 2 + \sum_{i,j =1}^{n-1} h_{ij}(y',y_n) dy'_{i} dy'_{j}, \quad y \in U_E(r_0)$$\
  
\noindent  where $ \sum_{i,j=1}^{n-1} h_{ij}(y',0) dy'_i dy'_j$ is the induced metric on $\Lambda_E.$  In the following, we abuse notation somewhat and simply write $h(y',y_n) |dy'|^2 : = \sum_{i,j=1}^{n-1} h_{ij}(y',y_n) dy'_{i} dy'_{j}.$  
  
In the following,  $ y_n  \in C^{\infty}(M;\R),$ is the Fermi defining function for $\Lambda_{E}$ defined above  with
$$ \Lambda_E = \{ y_n = 0  \}, \quad dy_n |_{\Lambda_E} \neq 0.$$
\noindent 
 Since under the monotonicity assumption, $\partial_{y_n} V(y) >0,$ it is immediate that then $\nabla V(y) \neq 0$ for $y \in U_E(r_0).$  Thus, it follows that $V(y)-E$ is  also a legitimate defining function in the tubular neighbourhood $U_E(r_0).$

We choose our sign convention so that 
\begin{equation} \label{sign}
\{ V > E \} \cap U_E(r_0) = \{y; 0 <  y_n  < r_0 \} \,\,\, \text{and} \,\,\, \{ V < E \} \cap U_E(r_0) = \{y; - r_0 <  y_n  < 0 \}. \end{equation}\

It will also be useful in the following to introduce the following annular domains in the forbidden region defined by

\begin{equation} \label{annulus}
A(\delta,\delta') :=  \{x \in M;  \delta < y_n(x) <  \delta'  \}, \quad  0 < \delta < \delta'.\end{equation} \

In terms of the Fermi coordinates $y=(y',y_n),$ the corresponding Agmon metric $g_E |_{U_E(r_0)}$   has the form

\begin{eqnarray} \label{agmonlocal0}
g_E = ( V(y) - E ) \,( dy_n^2 + h(y',y_n) |dy'|^2 ), \quad y \in U_E(r_0), \, y_n >0.
 \end{eqnarray}\


 

 
 

 

\begin{rem}{rem:A}{} \label{wlog}  By first-order Taylor expansion, 
\begin{eqnarray} \label{defining}
 V(y) - E =y_n \, F(y',y_n), \end{eqnarray}
 where
$$ F(y',y_n) =\int_{0}^{1} (\partial_{y_n}V) (y', t y_n) \, dt.$$

Differentiation of (\ref{defining}) in the $y$-variables, gives

$$ \partial_{y_n} V(y) = F(y',y_n) + y_n \partial_{y_n}F(y), \,\,\, \partial_{y'} V(y) = y_n \partial_{y'}F(y',y_n), \,\,\, F(y',0) = \partial_{y_n}V(y',0).$$ \

Thus, $ \partial_{y'} V(y',0) = 0$ and since $\nabla V |_{V=E} \neq 0$ it follows that $\partial_{y_n} V(y',0) \neq 0.$  Then, by continuity of $\nabla V$ there exists $r_0>0$ so that $| \partial_{y_n} V(y) | \geq C >0$ for $y \in U_E(r_0)$ and under the  sign conventions in (\ref{sign}), 
$$ \partial_{y_n} V(y) \geq C >0, \quad y \in U_E(r_0).$$  

Consequently, when $E \in \R$ is a regular value of $V$, one can always find a constant $r_0 >0$ so that the monotonicity assumption in Definition \ref{monotonicity} is satisfied.
\end{rem}

Under the monotonicity condition, we then have $F(y) \geq C >0$ for $y \in U_E(r_0)$  and  from (\ref{defining}), 

\begin{equation} \label{useful} 
\min_{y \in U_E(r_0) \cap \Omega_E^c } \partial_{y_n} V(y) \leq F(y) \leq \max_{y \in U_E(r_0) \cap \Omega_E^c} \partial_{y_n} V(y); \quad y \in U_E(r_0) \cap \Omega_E^c. \end{equation}

Finally, we note that in terms of the decomposition (\ref{defining}), the  Agmon metric can  be written in the form

\begin{eqnarray} \label{agmonlocal}
g_E =  y_n \, F(y) \,( dy_n^2 + h(y',y_n) |dy'|^2 ), \quad y \in U_E(r_0), \,\,\, y_n >0, 
 \end{eqnarray}\

\subsubsection{Locally minimal geodesics and Agmon distance}

In the collar neighbourhood $U_E(r_0),$ given a point $(y',y_n) \in U_{E}(r_0)\cap \Omega_E^c,$ there is a unique minimal  (i.e. distance-minimizing)  geodesic $\gamma: [0,1] \times \Lambda_E \to U_E(r_0)$ for the ambient metric $g$ joining $(y',y_n)$ to the caustic hypersurface $\Lambda_E.$ Setting $\gamma_t(y') = \gamma(t,y')$ where $\gamma_0(y') = (y',0) \in \Lambda_E$ and $\gamma_1(y') = (y',y_n),$ the minimal geodesic is just
$$ \gamma_t(y') = (y', ty_n); \quad 0 \leq t \leq 1.  $$ 
In terms of the discussion in subsection \ref{collar} above, these are just the geodesic segments normal to $\Lambda_E.$ 
It is easy to see that these normal geodesic segments to $\Lambda_E$ are unfortunately not, in general, minimal geodesics for the conformally rescaled Agmon metric $g_E$; indeed the latter can be quite complicated. Nevertheless, we will need the following elementary estimate for Agmon distance in terms of the natural Fermi defining function $y_n: M \to \R$ above.

\begin{lem} \label{estimate}  Let $E$ be a regular value of $V \in C^{\infty}(M)$ and assume that the monotonicity condition is satisfied in $U_E(r_0).$ Then, 
$$ d_E(y) \geq  \frac{2}{3} \,  ( \, \min_{y \in U_E(r_0)\cap\Omega_E^c} \partial_{y_n} V(y) \,)^{1/2} \,\, y_n^{3/2}; \quad y \in   U_E(r_0)\cap\Omega_E^c.$$
\end{lem}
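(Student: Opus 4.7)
The plan is to reduce the bound to a one-dimensional estimate on the normal-to-$\Lambda_E$ component of any admissible curve, using the Fermi-coordinate splitting of $g$ together with the factorization $V-E = y_n F(y)$ from (\ref{defining}).

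Let $c:[0,1]\to M$ be any absolutely continuous curve with $c(0)=y$ and $c(1)\in\Lambda_E$, and let $t^*\in(0,1]$ be the first time at which $c(t^*)\in\Lambda_E$; then $c([0,t^*))$ lies in the open forbidden region $\{V>E\}$. Set $y_n^*:=y_n(y)\in(0,r_0)$ and, whenever $c(t)\in U_E(r_0)$, put $\phi(t):=y_n(c(t))$. The main case is when $c([0,t^*])\subset U_E(r_0)$, so $\phi$ is defined on all of $[0,t^*]$ with $\phi(0)=y_n^*$ and $\phi(t^*)=0$. Two facts drive the estimate: first, the Fermi-coordinate form of the ambient metric, $g=dy_n^2+h(y',y_n)|dy'|^2$, gives the pointwise bound $|c'(t)|_g\ge|\phi'(t)|$; second, combining (\ref{defining}) with (\ref{useful}) yields $V(c(t))-E=\phi(t) F(c(t))\ge C_0\,\phi(t)$ on $U_E(r_0)\cap\Omega_E^c$, where $C_0:=\min_{y\in U_E(r_0)\cap\Omega_E^c}\partial_{y_n}V(y)$. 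Multiplying and taking the square root,
\begin{equation*}
\sqrt{(V(c(t))-E)_+}\,|c'(t)|_g\;\ge\;\sqrt{C_0}\,\sqrt{\phi(t)}\,|\phi'(t)|\;=\;\sqrt{C_0}\,\Bigl|\tfrac{d}{dt}\bigl(\tfrac{2}{3}\phi(t)^{3/2}\bigr)\Bigr|.
\end{equation*}
Integrating over $[0,t^*]$ and using $\int|F'|\ge|F(t^*)-F(0)|$ with $F=\tfrac{2}{3}\phi^{3/2}$ gives $L_{g_E}(c|_{[0,t^*]})\ge\tfrac{2}{3}\sqrt{C_0}\,(y_n^*)^{3/2}$.

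If instead $c$ leaves $U_E(r_0)$, let $t_1\in(0,t^*)$ be the first exit time and $t_2\in(t_1,t^*)$ the last re-entry time; both exist because the curve must traverse the forbidden-side boundary $\{y_n=r_0\}$ twice in order to return to $\Lambda_E\subset U_E(r_0)$, so $\phi(t_1)=\phi(t_2)=r_0$. Applying the same one-dimensional bound separately on $[0,t_1]$ (where $\phi$ goes from $y_n^*$ to $r_0$) and on $[t_2,t^*]$ (where $\phi$ goes from $r_0$ to $0$) produces
\begin{equation*}
L_{g_E}(c)\;\ge\;\tfrac{2}{3}\sqrt{C_0}\bigl(r_0^{3/2}-(y_n^*)^{3/2}\bigr)+\tfrac{2}{3}\sqrt{C_0}\,r_0^{3/2}\;\ge\;\tfrac{2}{3}\sqrt{C_0}\,(y_n^*)^{3/2},
\end{equation*}
the last step using $y_n^*<r_0$. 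Taking the infimum over $c$ yields the claimed lower bound on $d_E(y)$.

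Once the Fermi splitting of $g$ and the linear-in-$y_n$ bound on $V-E$ are in hand, the argument is essentially one-dimensional, so no step is expected to be delicate; the only minor technicality is the book-keeping in the second case, which is handled by the inequality $y_n^*<r_0$ built into the definition of $U_E(r_0)$.
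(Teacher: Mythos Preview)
Your proof is correct and follows essentially the same route as the paper: project to the normal Fermi coordinate $y_n$, use $|c'|_g\ge |\phi'|$ together with $V-E\ge C_0\,y_n$ from (\ref{useful}), and reduce to the one-dimensional integral $\int \sqrt{\phi}\,|\phi'|\,dt$. The paper phrases the last step as a ``change of variables $t\mapsto s=\gamma_n(t)$,'' whereas your total-variation inequality $\int|F'|\ge|F(t^*)-F(0)|$ with $F=\tfrac{2}{3}\phi^{3/2}$ is the cleaner formulation (and is what is really meant, since $\gamma_n$ need not be monotone).

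Your version is in fact slightly more complete than the paper's: the paper writes its minimal geodesic in Fermi coordinates throughout and does not address the possibility that the minimizer leaves the collar $U_E(r_0)$. Your second case, splitting at the first exit and last re-entry through $\{y_n=r_0\}$, closes that gap with the elementary inequality $2r_0^{3/2}-(y_n^*)^{3/2}\ge (y_n^*)^{3/2}$.
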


\begin{proof}
Let $\gamma: [0,1] \in \Omega_E^c$ be a piecewise-$C^1$ minimal geodesic for the Agmon metric $g_E$ joining $y = (y',y_n) \in U_E(r_0)\cap \Omega_E^c$ to $\Lambda_E;$ explicitly, $\gamma(0) = (y',y_n)$ and $\gamma(1) = (f(y',y_n), 0)$ where $f(y) \in \R^{n-1}.$ Then, writing $\gamma = (\gamma', \gamma_n),$ with $\gamma' = (\gamma_1,...,\gamma_{n-1}),$
$$ d_E(y) = \int_{0}^1 |d_t\gamma(t)|_{g_E} \,dt,$$
and since
 $$|d_t \gamma(t)|_{g_E} = \Big( \, F(\gamma(t)) \, \gamma_n(t)   \, |d_t \gamma_n(t)|^2 + F(\gamma(t)) \, \gamma_n(t) \,  \langle h(y(t)) \,  d_t \gamma'(t), \, d_t\gamma'(t) \rangle \, \Big)^{1/2},$$
  with $F, \gamma_n > 0,$  and $0 \leq h \in GL(n-1,\R),$ it follows that
  
$$d_E(y) \geq   \min_{y \in   U_E(r_0)\cap\Omega_E^c} F(y)^{1/2} \cdot \int_0^1 \gamma_n(t)^{1/2} \, |d_t \gamma_n(t)| \, dt, \quad  y \in   U_E(r_0)\cap\Omega_E^c.$$

Finally, by making the change of variables $t \mapsto s = \gamma_n(t)$ in the last integral, one gets
$$ d_E(y) \geq \min_{y \in   U_E(r_0)\cap\Omega_E^c} F(y)^{1/2} \cdot \int_{0}^{y_n} s^{1/2} \, ds, \quad  y \in   U_E(r_0)\cap\Omega_E^c.$$
and the lemma follows from this last estimate combined with (\ref{useful}) since $\min_{y \in   U_E(r_0)\cap\Omega_E^c} F(y)^{1/2} \geq \min_{y \in   U_E(r_0)\cap\Omega_E^c}  \,( \, \partial_{y_n}V (y) \,)^{1/2} .$  
\end{proof}




\subsection{Local control and Carleman bounds near the caustic $\Lambda_E$}

\subsubsection{Model computation}

Consider the model Airy operator $P_0(h) := (hD_y)^2 + y$ where $y \in \R$ where
$V(y) = y$ and  $E =0$ with the corresponding Airy-type weight function in the forbidden region given by
$$\phi_0(y) = \frac{2}{3} \, y^{ 3/2}, \quad y >0.$$

Then, the symbol of the conjugated operator $e^{\phi_0/h} P_0(h)  e^{-\phi_0/h}$ is 
$$p_{\phi_0}(y,\xi) = \xi^2 - |\phi'_0(y)|^2 + y + 2  i y^{1/2} \xi, \,\,\, y>0 $$ and
$$ \text{Char}(p_{\phi_0}) = \{ (y,\xi) \in \R^2; \xi = 0, y>0 \}.$$\

The latter follows since $(y,\xi) \in  \text{Char}(\phi_{\phi_0})$ iff
  $0 = |\xi|^2 - |\phi_0'(y)|^2 + y  + 2i y^{1/2} \xi$ which in turn holds iff $\xi = 0$ provided $y>0,$
  since $|\phi_0'(y)|^2 - y = 0.$ 
  \medskip

We note that the weight function $\phi_0$
is {\em borderline} for the  H\"{o}rmander subelliptic condition in the sense that
for $(y,0) \in \text{Char} \, (p_{\phi_0})$, we have

\begin{equation} \label{borderline}
\{ \Re p_{\phi_0}, \Im p_{\phi_0} \} = 4 \phi_0''(y) |\phi_0'(y)|^2 - 2 \phi_0'(y) \equiv 0,  \quad y >0. \end{equation} \

Of course, in this case, $\phi_0(y) = \frac{2}{3} y^{3/2} = \int_{0}^{y} \tau^{1/2} d\tau $ is precisely the Agmon distance function $d_E(y),$ where by convention we have set $E=0.$   In view of (\ref{borderline}), it is reasonable to expect that one can construct an appropriate perturbation, $\phi_{\epsilon}$, of the Airy model weight function $\phi_0$, that is a Carleman weight with $ \{ \Re p_{\phi_{\epsilon}}, \Im p_{\phi_{\epsilon}} \} >0$ in a neighbourhood of the caustic $\Lambda_{E}.$  Indeed, as we show in subsection \ref{Carleman} below, one can readily construct such a $\phi_{\epsilon}$ as a legitimate Carleman weight in a Fermi neighbourhood of $\Lambda_{E}.$  Combined with the control assumption on the eigenfunctions $u_h$, we will prove Theorem \ref{thm1} by applying Carleman estimates in a Fermi neighbourhood of the caustic with weignt $\phi_{\epsilon}.$

\subsubsection{Construction of the weight function} \label{Carleman}
Let $P(h) = -h^2 \Delta_g + V - E: C^{\infty}(M) \to C^{\infty}(M)$ and consider the conjugated operator 
$P_{\phi}(h) = e^{\phi/h} P(h) e^{-\phi/h}: C^{\infty}(M) \to C^{\infty}(M)$  with principal symbol 
$p_{\phi}(x,\xi) = |\xi|^2_g - |\nabla_x \phi|^2_g + V(x) - E+2i\left<\xi, \nabla_x \phi\right>_g.$
The model case above suggests that to create subellipticity for $P_{\phi}(h)$ in a Fermi neighbourhood of the caustic, it  should suffice to slightly modify the model weight function $\phi_0$ in the normal Fermi coordinate $(y',y_n).$ With this in mind, for $\epsilon >0$ arbitrarily small (for concreteness, assume $10 \epsilon < r_0$) and constant $\tau >0$ to be determined later on, we now set in Fermi coordinates $(y',y_n): U_E \to \R^{n-1} \times (-r_0, r_0),$

\begin{equation} \label{localweight}
\phi_{\epsilon}(y_n) :=   \Big( \frac{2}{3} + \epsilon \Big)  \, \tau \,  ( y_n + 10 \epsilon )^{3/2}, \quad  y_n \in (-4\epsilon, r_0). \end{equation}\

\begin{rem}{rem:B}{} We recall here that $r_0 < \text{inj}(M,g)$ is fixed (but not necessarily small), whereas $\epsilon >0$ will be chosen arbitrary small (but independent of $h$) consistent with the control assumption on the eigenfunctions.
\end{rem}

We abuse notation somewhat in the following and simply write $\phi= \phi_{\epsilon},$ the dependence on $\epsilon$ being understood.
Then, $\phi \in C^{\infty}([-4\epsilon,r_0])$ and plainly $\phi: [-4\epsilon, r_0] \to \R^+$  is strictly-convex and monotone increasing with

$$ \min \, ( \,\phi'(y_n), \phi''(y_n) \,) \geq C(\epsilon) >0, \quad  y_n \in (-4\epsilon, r_0).$$\

Let $\pi: T^*M \to M$ be the natural projection map $\pi(y,\xi)= y.$  Then, the relevant characteristic variety is
$$  \text{Char}(p_{\phi}) \cap \pi^{-1} ( \{ (y',y_n);  y_n \in (-4\epsilon, r_0) \} ) = \{ (y,\xi);  | \xi |_y^2 - |\partial_{y_n} \phi|^2 + F(y) y_n = 0,  \, \xi_n  = 0, \,\, y_n \in (-4\epsilon, r_0) \}.$$\

Since $F(y)>0,$ it follows that this set is non-trivial; indeed for any $-4\epsilon < y_n <0$ (ie. a point in the allowed region),
$$   \text{Char}(p_{\phi}) \cap \pi^{-1} ( \{(y',y_n), y_n \in (-4\epsilon, 0) \}) \cong S^*M_{A(-4\epsilon, 0)}  \cap \{ \xi_n = 0 \},$$

\noindent where $S^*M_{A(-4\epsilon, 0)}:= \{ (y,\xi) \in T^*M; |\xi|_y = 1, \,\, y \in A(-4\epsilon, 0) \}.$

Since Char$(p_{\phi})$ is non-trivial, global ellipticity over the interval $(-4\epsilon, r_0)$ evidently fails. However, we claim that  {\em subellipticity} is now satisfied in such an interval provided $\tau >0$ is chosen large enough but depending only on the potential $V$. Indeed, since the normal Fermi coordinate is $y_n $ and $\phi$ is a function of {\em only} $y_n$ with $g_{n,n} = 1,$ a direct computation gives,

\begin{align} \label{guts}
\{ \Re p_{\phi}, \Im p_{\phi} \} &= \{ \xi_n^2 + |\xi'|^2_y - (\partial_{y_n} \phi)^2 + V - E,   \, 2 \partial_{y_n} \phi \cdot \xi_n \} \nonumber \\
&= 4 \partial_{y_n}^2 \phi  \,\Big(   |\partial_{y_n} \phi |^2 + \xi_n^2  \Big) - 2 \partial_{y_n} \phi \cdot \partial_{y_n} V \nonumber \\
&\geq 2  \partial_{y_n} \phi  \, ( \, 2  \partial_{y_n}^2\phi \cdot \partial_{y_n} \phi - \, \partial_{y_n} V \, ) \nonumber \\
&\geq 2 \tau  C(\epsilon) ( \, 2  \partial_{y_n}^2\phi \cdot \partial_{y_n} \phi - \, \partial_{y_n} V \, ), \quad y_n \in (-4\epsilon,  r_0). 
\end{align}\

From (\ref{localweight}), for any $\epsilon >0$ and  for all $ y_n \in (-4\epsilon,  r_0),$

$$ 2 \partial_{y_n}^2 \phi \cdot \partial_{y_n}\phi \equiv \frac{9}{4} \tau^2  \Big(\frac{2}{3} + \epsilon \Big)^2 > \tau^2. $$\


Choosing 
\begin{equation} \label{scaling}
\tau = \| \partial_{y_n} V \|_{L^\infty(U_E(r_0) )}^{1/2},
\end{equation} \

it follows from (\ref{guts}) that  for all $(y, \xi)$ with $y_n \in (-4\epsilon, r_0),$

$$ \{ \Re p_{\phi}, \Im p_{\phi} \}(y,\xi) \geq C(\tau,\epsilon) >0.$$\

Consequently, $\phi = \phi_{\epsilon}$ is a Carleman weight for $P(h)$ {\em globally} in the Fermi neighbourhood of the caustic where $- 4\epsilon < y_n <  r_0.$\\

 Now, let $\chi \in C^{\infty}_0(\R;[0,1])$ be a cutoff satisfying 

$$\chi(y_n) = 1; \quad -\frac{3}{2}\epsilon < y_n < \tilde{\delta_1}$$
with 
$$ \hspace{8mm} \chi(y_n) = 0; \quad  y_n \in \R \setminus  (-2\epsilon, \tilde{\delta_2}).$$\\
where $0 < \tilde{\delta_1} < \tilde{\delta_2} <  r_0.$ \\

\begin{center}
\begin{tikzpicture}
\draw[->] (-3,0) -- (-2,0) node[below]{$-2\epsilon$} -- (-1.7,0) node[below right]{-$\frac{3}2 \epsilon$} -- (-.8,0) node[below]{0} -- (4.0,0) node[below]{$\tilde{\delta_1}$} -- (6,0) node[below]{$\tilde{\delta_2}$} -- (7.5,0) node[below]{$\R$};
\draw[fill] (-.8,0) circle [radius=1pt];
\draw[fill] (-2,0) circle [radius=1pt];
\draw[fill] (-1.5,0) circle [radius=1pt];
\draw[fill] (4.0,0) circle [radius=1pt];
\draw[fill] (6,0) circle [radius=1pt];
\draw[very thick] (-2,0) to [out=0,in=180] (-1.5,1) to [out=0, in=180] (4.5, 1) to [out=0, in=180] (6, 0);
\node[above] at (10,0.5) {Figure 1. \,\,\,cutoff function $\chi$};
\end{tikzpicture}
\end{center}
\medskip


  We note that one can write 
$$ \chi = \chi_{-} \, \chi_+$$ where,  $\chi_{+} \in C^{\infty}(\R; [0,1])$ satisfies
$$ \chi_{+}(y_n) = 1, \quad  y_n <  \tilde{\delta_1}, $$
$$  \chi_{+}(y_n) = 0, \,\,\, y_n > \tilde{\delta_2},$$ 

\noindent and similarily, $\chi_{-} \in C^{\infty}(\R; [0,1])$ satisfies
$$ \chi_{-}(y_n) = 1;  \quad y_n >  -\frac{3}{2} \epsilon,$$
$$ \chi_{-}(y_n) = 0; \quad y_n  < -2 \epsilon.$$

Then, it follows from Leibniz rule that
$$ \text{supp} \, \partial \chi \subset \text{supp} \,\partial \chi_{+} \cup \text{supp} \,\partial \chi_{-},$$

\noindent where supp $\partial \chi_{-} \subset [-2 \epsilon, - \frac{3}{2} \epsilon]$ and supp $\partial \chi_{+} \subset [ \tilde{\delta_1}, \tilde{\delta_2}].$  \  \\


Set $P_{\phi}(h) := e^{\phi/h} P(h) e^{-\phi/h}: C^{\infty}_{0}(U) \to C^{\infty}_{0}(U)$ and with $\chi = \chi(y_n)$ above,
$$ v_h := e^{\phi/h} \chi  u_h$$ where $P(h) :=-h^2 \Delta_g + V(x) - E(h)$ and
$$P(h) u_h = 0.$$
Moreover, we assume throughout that the eigenfunctions $u_h$ are $L^2$-normalized with \newline $\| u_h \|_{L^2(M,g)} = 1.$

In view of the subellipticity estimate in (\ref{guts}) and the support properties of the cutoff $\chi \in C^{\infty}_0$ it follows by 
the standard Carleman estimate \cite[Theorem 7.7]{Zwo}  that

\begin{equation} \label{c1} 
\| P_{\phi}(h) v_h \|^2_{L^2} \geq C_1(\epsilon) h  \, \| v_h \|_{H_h^1}^2. \end{equation}\

Since $P(h) u_h =0$  and $P_{\phi}(h)$ is local with  $\text{supp}\,  \partial \chi_+ \cap \text{supp}\, \partial  \chi_-= \emptyset, $  it follows from (\ref{c1}) that

\begin{eqnarray} \label{c2}
 \| e^{\phi/h} [P(h), \chi] \, u_h \|_{L^2}^2  
  \geq C_1(\epsilon) h \, \| e^{\phi/h} \chi u_h \|_{H_h^1 }^2   \end{eqnarray}\
  
  Let  $\supp\widetilde{\partial \chi_{\pm}}$  be arbitrarily small neighbourhoods of $\text{supp} \, \partial \chi_{\pm}$ respectively.  Specifically, we can assume that $ \text{supp} \, \widetilde{ \partial \chi_\pm} \, \supset \, \text{supp} \, \partial \chi_{\pm},$ and choose $(\delta_1,\delta_2)$ with $0 < \delta_1 < \tilde{\delta_1} < \tilde{\delta_2} < \delta_2 $ so that $\text{supp} \, \widetilde{ \partial \chi_+} \subset [\delta_1,\delta_2]$  and in addition
$$ \text{meas} \, (  \text{supp} \, \widetilde{ \partial \chi_\pm} \setminus \text{supp} \, \partial \chi_{\pm} ) \leq \frac{\epsilon}{10}.$$

Since the support of the coefficients of the $h$-differential operator $[P(h),\chi]$ is contained in 
 $\text{supp} \, \widetilde{ \partial \chi_{+} } \cup  \text{supp} \, \widetilde{ \partial \chi_{-}},$ it follows from (\ref{c2}) that

 \begin{eqnarray} \label{c3}
 \| e^{\phi/h} [P(h), \chi]  \, u_h \|_{L^2 ( \text{supp} \,\widetilde{\partial \chi_+}) }^2 +  \,  \| e^{\phi/h}  [P(h), \chi] u_h \|_{L^2 ( \text{supp} \, \widetilde{ \partial \chi_-} )}^2 & \nonumber \\ \nonumber \\
  \geq C_1(\epsilon) h \| e^{\phi/h} \chi u_h \|_{H_h^1}^2. &\end{eqnarray}\

  Since $\chi(y_n) = 1$ when $y_n \in (-\epsilon/2, 0),$ it follows from (\ref{c3}) that
  
   \begin{eqnarray} \label{c3.5}
 \| e^{\phi/h} [P(h), \chi]  \, u_h \|_{L^2 ( \text{supp} \, \widetilde{\partial \chi_+}) }^2 +  \,  \| e^{\phi/h}  [P(h), \chi] u_h \|_{L^2 ( \text{supp} \, \widetilde{ \partial \chi_-} )}^2 & \nonumber \\ \nonumber \\
  \geq C_1(\epsilon) h \| e^{\phi/h} u_h \|_{H_h^1 (\{ y; -\epsilon/2 \leq y_n \leq 0 \} )}^2. &\end{eqnarray}\
  
  Since $e^{\phi/h} [P(h),\chi]  e^{-\phi/h} = h Q(h)$ where $Q(h) = a(x) h \partial_x + b(x), \,\, a, b \in C^{\infty}(M)$, is an $h$-differential operator of order one,

\begin{equation} \label{c3.6}
 \| e^{\phi/h} [P(h), \chi]  \, u_h \|_{L^2 ( \text{supp} \, \widetilde{\partial \chi_{\pm} }) }^2 \leq C h^2 \| e^{\phi/h} u_h \|^2_{ H_h^1 ( \text{supp} \, \widetilde{ \widetilde{\partial \chi_{\pm}} } )}. \end{equation}

\noindent where  supp  $ \widetilde{ \widetilde{\partial \chi_{\pm}} } \Supset \text{supp}\,  \widetilde{\partial \chi_{\pm}}.$ After relabelling $\widetilde{ \widetilde{\partial \chi_{\pm}} }  $ simply as $ \widetilde{\partial \chi_{\pm}}$, one gets from (\ref{c3.5}) and (\ref{c3.6}) that with an appropriate constant $C_2(\epsilon)>0,$

\begin{eqnarray} \label{c3.7}
h^2\| e^{\phi/h} u_h \|^2_{ H_h^1 ( \text{supp} \, \widetilde{\partial \chi_{+}} )} + h^2 \| e^{\phi/h} u_h \|^2_{ H_h^1 ( \text{supp} \, \widetilde{\partial \chi_{-}}  )} & \nonumber \\ \nonumber \\
\geq C_2(\epsilon) h \| e^{\phi/h} u_h \|_{H_h^1 (\{ y; -\epsilon/2 \leq y_n \leq 0 \} )}^2, &\end{eqnarray}  \ 

\noindent or equivalently,
 
 \begin{equation} \label{c4}
 h^2 \| e^{\phi/h}  u_h \|_{H^1_h( \text{supp} \, \widetilde{ \partial \chi_+})}^2   
 \geq  C_2(\epsilon) h \| e^{\phi/h}  u_h \|_{H_h^1(\{ y; -\epsilon/2 \leq y_n \leq 0 \} )}^2 -  h^2 \| e^{\phi/h} u_h \|_{H_h^1( \text{supp} \, \widetilde{ \partial \chi_{-} })}^2, \end{equation}  \\



Since $\widetilde{\partial \chi_{-}}$ is supported in the classically allowed region where $y_n<0$, we will now use the control assumption  in Definition \ref{control} to get an effective lower bound for the RHS in (\ref{c4}). \

Computing in Fermi coordinates and using the fact that one can take  supp $\, \widetilde{ \partial \chi_-} \subset \{ y; -3\epsilon < y_n < - \epsilon \},$ the RHS of (\ref{c4}) is 

\begin{eqnarray} \label{c5}
&\geq  \,C_2(\epsilon) h  \int_{ \{ y; y_n \in (-\frac{\epsilon}{2}, 0 )  \} }  e^{2\phi(y_n)/h} ( |u_h(y)|^2  + |h \partial_y u_h(y)|^2) \, dy' dy_n \nonumber \\ \nonumber \\
& - \,  h^2 \int_{ \{ y; y_n \in (- 3\epsilon, - \epsilon) \} }  e^{2\phi(y_n)/h} ( |u_h(y)|^2  + |h \partial_y u_h(y)|^2) \, dy' dy_n.
\end{eqnarray}\\


Next we use strict monotonicity of the weight function $\phi \in C^{\infty}([-3\epsilon, r_0])$ in (\ref{localweight}).
 We set $m(\epsilon):= \min_{y_n \in (-\frac{\epsilon}{2}, 0)}  \phi(y_n) >0$ and $M(\epsilon):= \max_{y_n \in  (- 3\epsilon, - \epsilon) } \phi(y_n)>0$  (we note that   both $m(\epsilon)$ and $M(\epsilon)$ are of order $\epsilon^{3/2}$). Then, since $\phi$ is strictly increasing, 
$$ m(\epsilon) - M(\epsilon) = C_3(\epsilon)>0.$$
So, it follows that (\ref{c5}) is bounded below by

\begin{eqnarray}  \label{c6}
 C_2(\epsilon) e^{2m(\epsilon)/h} \Big( \, h  \| u_h \|^2_{H_h^1( \{y; y_n \in (-\frac{\epsilon}{2}, 0 )  \} )} - C_2(\epsilon)^{-1}  h^2 e^{2[M(\epsilon)-m(\epsilon)]/h }  \| u_h \|^2_{H_h^1( \{y; y_n \in (- 3\epsilon, - \epsilon )  \} )}  \Big). \,\,\,\,
\end{eqnarray}\\



Finally, by standard elliptic estimates, $\| u_h \|_{H_h^1} = O(1)$ and by the control assumption in Definition \ref{control}, it follows that  for any $\epsilon >0,$
$$\| u_h \|^2_{H_h^1( \{y; y_n \in (-\frac{\epsilon}{2}, 0) \} ) } \geq C_{3,N}(\epsilon) h^{N}.$$

Consequently, from (\ref{c4})-(\ref{c6}) it follows that with $h \in (0,h_0(\epsilon)],$ there exist constants $C_{j,N}(\epsilon)>0, j=3, 4, 5,$ such that

\begin{eqnarray} \label{UPSHOT}
 h^2 \| e^{\phi/h}  u_h \|_{H^1_h( supp \, \widetilde{ \partial \chi_+}) }^2  &  \geq C_2(\epsilon) e^{2m(\epsilon)/h} \Big(  h^{N+1} C_{3,N}(\epsilon) + O_{\epsilon}(e^{-2 C_3(\epsilon)/h}) \Big) \nonumber \\
& \geq C_{4,N}(\epsilon) h^{N+1}  e^{2m(\epsilon)/h}   \geq C_{5,N}(\epsilon) e^{m(\epsilon)/h}.
  \end{eqnarray}\\
  
  Next, we relate the weight function $\phi_{\epsilon}$ to Agmon distance $d_E$. From Lemma \ref{estimate} we recall that
   \begin{align} \label{relate0}
  d_E(y)  &\geq \frac{2}{3} (\min_{y \in U_E(r_0)\cap\Omega_E^c} \partial_{y_n} V )^{1/2}  \, y_n^{3/2}, \quad   y \in U_E(r_0) \cap \Omega_E^c \\ \nonumber \\ 
  &= \Big( \frac{ \min_{y \in U_E(r_0)\cap\Omega_E^c} \partial_{y_n} V  } { \max_{U_E(r_0)} \partial_{y_n} V } \Big)^{1/2} \,\phi_{\epsilon}(y_n) + O(\epsilon).  
  \end{align}\\
 
  The last estimate in (\ref{relate0}) follows since in the definition of the weight $\phi_{\epsilon}$  (see (\ref{localweight})), we choose $\tau = \max_{y \in U_E(r_0)} |\partial_{y_n} V|^{1/2}$ (see (\ref{scaling})).  Since from  the monotonicity assumption in Definition \ref{monotonicity},
  $ \min_{y \in U_E(r_0)\cap\Omega_E^c} \partial_{y_n} V \geq C(r_0) >0,$   it then follows that
 \begin{eqnarray} \label{relate}
  \phi_{\epsilon}(y_n) &\leq \Big( \frac{  \max_{y \in U_E(r_0)}  \partial_{y_n} V   }{  \min_{y \in U_E(r_0)\cap\Omega_E^c}  \partial_{y_n} V  }   \Big)^{1/2} \, d_E(y) + O(\epsilon). 
 \end{eqnarray}\\
 Thus, in view of (\ref{UPSHOT}) and (\ref{relate}), we have proved the following reverse Agmon estimate for eigenfunctions satisfying the control assumption.\\

  \begin{theo}\label{thm1}
  Let $r_0>0$ define the collar neighbourhood $U_E(r_0)$ of the hypersurface $\{ V= E\}$ as above and consider an annular subdomain   $$ A(\delta_1,\delta_2) \subset \Big( \{ V> E \} \cap U_E(r_0) \Big), \quad 0<\delta_1 < \delta_2 < r_0. $$  Then, under the control and monotonicity assumptions in Definitions \ref{control}  and \ref{monotonicity}, it follows that for any $\epsilon >0$ and  $h \in (0,h_0(\epsilon)],$ there exists a constant $ C(\epsilon,\delta_1,\delta_2) >0$ such that
  
  $$ \| e^{ \tau_0 \, d_E /h} u_h \|_{H_h^1(A(\delta_1,\delta_2))} \geq C(\epsilon,\delta_1,\delta_2) \,e^{-\beta(\epsilon)/h}, $$
  with 
  $$\tau_0 = \Big( \frac{ \max_{U_E(r_0)} \partial_{y_n} V }{  \min_{U_E(r_0)\cap\Omega_E^c} \partial_{y_n}V } \Big)^{1/2}$$
  
 \noindent and where   $ 0 < \beta(\epsilon) = O(\epsilon)$ as $\epsilon \to 0^+.$ \end{theo}\

 \begin{rem}{rem:B}{} \label{general} We note in the more general case where the eigenfunction sequence satisfies the $\epsilon_0$-control assumption, the estimate in Theorem \ref{thm1} is still valid (similarily for Theorems \ref{thm2} and \ref{thm3}). In such a case, the constant $\beta(\epsilon_0)$ can be readily estimated explicitly in terms of the potential from (\ref{relate0}) and (\ref{relate}) above. 
 
 
 \end{rem}

 


  
    \section{$L^p$ restriction lower bounds in forbidden regions} \label{restriction}

    Consider a $C^{\infty}$ hypersurface $H \subset \Omega_E^c$ in the forbidden region that bounds a domain $\Omega_H \subset \Omega_E^c$   and is {\em admissible} in the sense of Definition \ref{admissible} (ii) (see also Figure 2).   The point of this section is to extend Theorem \ref{thm1} to lower bounds for $L^2$-restrictions of eigenfunctions to hypersurfaces $H$ in the forbidden region.
    
    Let $\nu$ be the unit exterior normal to $H$ with $\langle \nabla V, \nu \rangle < 0.$ Then, under the separation assumption above, by Green's formula,
    
    \begin{equation} \label{green1}
   \int_{\Omega_H} |h \nabla u_h |_g^2 \, dv_g + \int_{\Omega_{H}} (V-E(h)) |u_h|^2 \, dv_g = h^2 \int_H \partial_{\nu} u_h \cdot u_h  \,d\sigma  \end{equation}\
   
  
  
   
  
  
  

   Using the fact that $V(x) -E \geq C>0$ for all $x \in \Omega_H,$  it follows from (\ref{green1}) that with a constant $C_{\delta} = C(V,E,E',\delta)>0$
    
    \begin{equation} \label{green4}
 h^2   \int_{H} \partial_{\nu} u_h \cdot u_h  \,d\sigma   \geq  C_{\delta}  \| u_h \|_{H^1_h(\Omega_H)}^2.\end{equation}\

 

   From the pointwise Agmon estimates   in (\ref{agmonptwise}),  for any $\delta >0,$
   $$ \| h \partial_{\nu} u_h \|_{L^\infty(H)} = O_{\delta} (e^{[-d_E(H) + \delta ]/h} ), \quad d_E(H):= \min_{q \in H} d_E(q)$$
 together with the H\"{o}lder inequality,
    
    \begin{equation} \label{green6}
      \|u_h \|_{L^p(H)}  \geq  C_{\delta}(p)  \, e^{[d_E(H) - \delta]/h} \| u_h \|^2_{H^1_h( \Omega_H) }, \quad p \geq 1.  \end{equation} \
      


     
     \begin{defn} \label{admissible}
     We say that the hypersurface $H \subset \{ V>E \}$ is {\em admissible} provided:\\
     
     (i) $H$ is a separating hypersurface bounding a $C^{\infty}$ domain $\Omega_H \subset \{ V> E \}.$\\
     
     (ii) There exists $E' > E$ such that the hypersurface $\Lambda_{E'}  = \{ y_n = E'-E \}$ has the property that 
     $$ \Lambda_{E'} \subset \Omega_H \cap U_E(r_0).$$
     \end{defn}
     
\begin{center}
\begin{tikzpicture}
\draw [fill=red!20] (-0.7,-0.3) to [out=97, in=90] (3.2, 1.7) to [out=270, in=270] (-0.7, -0.3);
\draw [thick, fill= white] (0,0) to [out=90, in=90] (2,1) to [out=270, in=270] (0,0);
\node at (1.03,0.5) {$V<E$};
\draw [very thick] (-0.4,-0.4) to [out=90, in=90] (2.5,1.5) to [out=270, in=270] (-0.4, -0.4);
\node at (2.1, 1.7) {$H$};
\node at (1.3, -1.3) {Admissible};

\draw [fill=red!20] (5.3, -0.3) to [out=97, in=90] (9.2, 1.7) to [out=270, in=270] (5.3, -0.3);
\draw [thick, fill= white] (6,0) to [out=90, in=90] (8,1) to [out=270, in=270] (6,0);
\node at (7.03, 0.5) {$V<E$};
\draw [very thick] (8,1.5) to [out=90, in=180] (8.5, 2) to [out=0, in=90] (9,1.5) to [out=270, in=270] (8, 1.5);
\node at (8.7, 1.5) {$H$};
\node at (7.2, -1.3) {Not Admissible};
\node at (4.5,-2.2) {Figure 2. \,\,\,Red region is $ \{ V> E \}\cap U_{E}(r_0)$};
\end{tikzpicture}
\end{center}
\medskip     
     
  Set   
   \begin{equation} \label{eh}
   E(H):= \inf \{ E'>E; \, \Lambda_{E'}\subset  ( \Omega_H \cap U_E(r_0) ) \}. \end{equation}\
   
   Since $\Lambda_{E'}  \cap \Omega_H = \emptyset$ for any $E' >E$ sufficiently close to $E$, it follows that $E(H) >  E. $
   Moreover, under the admissiblity assumption, it follows that for any $\delta >0$ sufficiently small  (we use the same $\delta>0$ here as in (\ref{green6})) 
   
   $$ A(E(H), E(H) + \delta) \subset ( \Omega_H \cap U_E(r_0) )$$\
   
\noindent   and so,
   
   \begin{equation} \label{green7}
    \| u_h \|^2_{H^1_h( \Omega_H) }  \geq  \| u_h \|^2_{H^1_h (A(E(H), E(H) + \delta)}. \end{equation}\
   
   From the Carleman estimate in Theorem \ref{thm1},
  
   \begin{equation} \label{green8}
   \| e^{\tau_0 d_E/h} u_h \|^2_{H^1_h (A(E(H), E(H) + \delta) )} \geq C(\delta,\epsilon) e^{- \beta(\epsilon)/h},
   \end{equation}\
   
\noindent   where $ \beta(\epsilon) \to 0^+ $ as $\epsilon \to 0^+.$  Here, we recall that  $\epsilon >0$ is the parameter appearing in the control condition in Definition \ref{control}. 
   \\
   
Since $\delta >0$ is  arbitrary, we can set $\delta = \epsilon$ in (\ref{green6})-(\ref{green8}) and then, it follows that for any $\epsilon>0,$
  and  with
    
    \begin{equation} \label{UPSHOT2}
   \tau_0 = \Big( \frac{ \max_{U_E(r_0)} \partial_{y_n} V }{  \min_{U_E(r_0)\cap\Omega_E^c}  \partial_{y_n}V } \Big)^{1/2}, \quad d_E^H := \max_{q \in \Lambda_{E(H)} } d_E(q), \quad d_E(H) = \min_{q \in H} d_E(q), \end{equation}\
   
\noindent one has the following lower bound for $L^p$-restrictions of the $u_h$ to $H:$   
    

$$  \|u_h \|_{L^p(H)}  \geq  C(\epsilon,p) \, e^{- 2 \tau_0 \cdot d_{E}^H/h }  \cdot e^{d_E(H)/h} \cdot e^{-\tilde{\beta}(\epsilon)/h}, \quad p \geq 1$$\

\noindent   where $\tilde{\beta}(\epsilon):=\beta(\epsilon) + \epsilon \to 0^+$ as $\epsilon \to 0^+.$   In summary, we have proved
   \bigskip


        


 
 
 \begin{theo} \label{thm2} Let $H$ be an admissible hypersurface in sense of Definition \ref{admissible}. Then, under the control and monotonicity assumptions in Definitions \ref{control}  and \ref{monotonicity} and  with  $E(H)$ in (\ref{eh})  and $d_E^H, d_E(H), \tau_0$ in (\ref{UPSHOT2}), it follows that for any $\epsilon >0$ and with $h \in (0,h_0(\epsilon)],$ 
 
 $$   \|u_h \|_{L^p(H)}  \geq C(\epsilon,p) \, e^{ - \, [ \, 2 \tau_0 \, d_{E}^H  - d_E(H) + \tilde{\beta}(\epsilon) \, ] / h}, \quad p \geq 1,$$\
 
 \noindent where $\tilde{\beta}(\epsilon) \to 0^+$ as $\epsilon \to 0^+.$  \end{theo}

    \begin{rem}{rem:C}{} We note that since $\tau_0 \geq 1$ and  $d_E^H \geq d_E(H)$, it is clear that the constant  $2\tau_0(H) d_E^H - d_E(H) >0.$   
    \end{rem}

    
    
  
  
  

\bigskip
    
    \section{Nodal intersection bounds in forbidden regions} \label{nodal}

     Consider the special case where $\dim M = 2$ and $(M,g,V)$ are, in addition, real-analytic. Let  $H \subset \Omega_{E}^c$ be a simple, closed, real-analytic curve in the forbidden region.
    In \cite{CT} ,the authors obtained nodal intersection bounds for the nodal sets of the eigenfunctions $u_h$ with the fixed curve, $H.$
    More precisely,   given the nodal set 
    $$ Z_{u_h} = \{x \in M;  u_h(x) = 0\},$$
the problem is to estimate the number of nodal intersections with $H$; that is 
    $ \# \{ H \cap Z_{u_h} \}$ which is just the cardinality of the intersection. Indeed, under an exponential lower bound on the $L^2$-restrictions of the eigenfunctions (ie. a {\em goodness} bound), this intersection consists of a finite set of points. 
    

 Let  $q:[0,2\pi] \to H$  be a $C^{\omega}$, $2\pi$-periodic, parametrization of $H$. 
To bound the number of zeros of $u_h \circ q: [0, 2\pi] \to \R$ we consider its holomorphic extension   $(u_h \circ q)^\C:H_\rho^\C \to \C$ to
  $$H_{\rho}^{\C} := \{ t \in \C: \; \Re t \in [0,  2\pi],  \; |\Im t| < \rho \}.$$
  
  The zeros of $(u_h \circ q)^\C$ are studied using the Poincar\'e-Lelong formula:
  $$\partial \overline{\partial} \log |(u_h \circ q)^\C(z)|^2=\sum_{z_k \in  Z_{(u_h \circ q)^\C} } \delta_{z_k}(z).$$
  
   Consider the complex tube of radius $\rho>0$ about $H$  given by  $A(\rho) := q^{\C} (H_{\rho}^{\C})$  and let $G_{\rho}$ be the Dirichlet Green's function of $A(\rho)$ satisfying:  
  
  $$ \Delta_y G_{\rho}(x,y) = \delta_x(y), \quad (x,y) \in A(\rho) \times A(\rho),$$
  $$ G_{\rho}(x,q) = 0, \quad (x,q) \in A(\rho) \times \partial A(\rho).$$
  We define the constant  
  
  \begin{equation} \label{ch} 
C_H(\rho) := \big| \, \max_{(z,w) \in A(\rho/2) \times A(\rho/2)} G_{\rho}(z,w) \, \big|^{-1}. \end{equation}
We note here that by the maximum principle $  \max_{(z,w) \in A(\rho/2) \times A(\rho/2) }  \, G_{\rho}(z,w)  < 0,$ so that $C_H(\rho) >0$ is finite. Then, by \cite[Proposition 10]{TZ},  
 \begin{equation} \label{E: [TZ]}
 \# \{ Z_{u_h} \cap H \} \leq  \# \{ Z_{(u_h \circ q)^\C} \cap H_{\rho}^{\C}\} \leq   C_H(\rho)   \, \max_{t \in H_{\rho}^{\C}} \log |F_{h}^{\C}(t)|, 
   \end{equation}

\noindent where $F_h^{\C}(t)$ with $t \in H_{\rho}^{\C}$ is the holomorphic continuation of  the normalized eigenfunction traces 
 \begin{equation}\label{E: U(t)}
  F_{h}(t):= \frac{u_h(q(t))}{\|u_h \|_{L^2(H)}}.
  \end{equation}
 It follows that we shall need to control the complexification $F_h^{\C}(t)$ to obtain upper bounds on the complex counting function $\#\{Z_{\phi_h} \cap H \} $. 
 Without loss of generality we assume that $H \subset \, \text{int} \, \Omega_\gamma $ where  $\Omega_\gamma \subset \Omega_{E}^c$ is  a domain whose closure is contained in $\Omega_E^c$  and whose boundary is a closed $C^\omega$ curve that we call $\gamma$. In particular, one has $\gamma \cap H = \emptyset$  so that with $d_E(H,\gamma):= \inf_{(q,r) \in H \times \gamma} d_E(q,\gamma),$
  $$d_E(H,\gamma) > 0, \quad d_E(H) - d_E(\gamma) >0.$$
 
 In \cite{CT} (see section 2), the authors consider the $h$-elliptic operator $ -h^2 \Delta_{g_{\Omega_E}} + 1 : C^{\infty}(M) \to C^{\infty}(M)$ where $g_{\Omega_E}$ is a metric extension to $M$ of  the Agmon metric $g_E |_{\overline{\Omega}_\gamma}$.
 
 After choosing a suitable cutoff $\chi_{\Omega_{\gamma}} \in C^{\infty}_{0}(\Omega_E^c)$ with $0 \leq \chi_{\Omega_{\gamma}} \leq 1$ and $\chi_{\Omega_{\gamma}} |_{\Omega_{\gamma}} = 1,$ one can define  the $h$-pseudodifferential operator $Q(h) =  \chi_{\Omega_{\gamma}} (-h^2 \Delta_{g_{\Omega_E}} + 1 )^{-1} \chi_{\Omega_{\gamma}}: C^{\infty}_0(\Omega_E^c) \to C^{\infty}_0(\Omega_E^c).$  The restriction of the Schwartz kernel of $Q(h)$ to $\gamma \times H$ given by $Q_{\gamma}^H(t,s,h):= Q(q(t),r(s),h)$ is then the kernel of an analytic $h$-Fourier integral operator $Q_{\gamma}^H(h): C^{\infty}(\gamma) \to C^{\infty}(H)$ where $Q_\gamma^H(t,s,h)$ is of the form
 
 \begin{eqnarray} \label{psdokernel}
 (2 \pi h)^{-2} \int_{\R^2} e^{i \langle q(t)-r(s),\xi \rangle/h} e^{-C|q(t)-r(s)|^2 \langle \xi \rangle /h} a(q(t),r(s),\xi;h) \, d\xi, \quad
  (q(t),r(s)) \in H \times \gamma,\end{eqnarray}\
  
In (\ref{psdokernel}), $a$ is an analytic semiclassical symbol of order zero and $C>0$ is an appropriate constant.  Moreover, the Schwartz kernel $Q(t,s,h)$ extends holomorphically in the outgoing $t$ variable to $Q^{\C}(t,s)$ with $(t,s) \in H_{\rho^*}^{\C} \times \gamma$ where $\rho^* >0$ is a suitable tube radius independent of $h$ (see Remark \ref{rem:D} (ii) and (iii) below).   It then follows from a potential layer formula \cite{CT} (14), that the eigenfunction restriction $u_h \circ q(t)$ also holomorphically continues to $(u_h \circ q)^{\C}(t)$ in the strip $H_{\rho^*}^{\C}.$ Setting
 $$ {\bf Q}(h,\rho^*):=  \max_{(r(s),q^{\C}(t)) \in \gamma \times H_{\rho^*}^{\C}} ( |Q^{\C}(t,s)|, | \partial_{\nu(s)} Q^{\C}(t,s)) | )$$
and using the formula in (\ref{psdokernel}) combined with some potential layer analysis,
in \cite{CT} (16), it is proved that
 
 \begin{eqnarray} \label{forbidden}
 |F_h^{\C}(t)| \leq C \, {\bf Q}(h,\rho^*) \left( \frac{ \| u_h \|_{L^2(\gamma)} }{ \|u_h \|_{L^2(H)} } + \frac{ \| \partial_{\nu} u_h \|_{L^2(\gamma)} }{ \| u_h \|_{L^2(H)} } \right), \quad t \in H_{\rho^*}^{\C}.
 \end{eqnarray}

In \cite{CT} Theorem 4, the authors show that  by choosing $\rho^* >0$ sufficiently small (see Remark \ref{rem:D} (iii)) one can show that

\begin{equation} \label{qbound}
 {\bf Q}(h,\rho^*) = O(1). \end{equation}

Thus, from (\ref{forbidden}) this gives (see Remark \ref{rem:D}) ),

  \begin{eqnarray} \label{Forbidden}
 |F_h^{\C}(t)|  \leq  C  \,  \left( \frac{ \| u_h \|_{L^2(\gamma)} }{ \|u_h \|_{L^2(H)} } + \frac{ \| \partial_{\nu} u_h \|_{L^2(\gamma)} }{ \| u_h \|_{L^2(H)} } \right), \quad t \in H_{\rho^*}^{\C} 
 \end{eqnarray}

 It follows from the Agmon estimates in (\ref{agmon}) that for any $\epsilon >0,$ 
 
 $$\max \{  \, \| u_h \|_{L^2(\gamma)}, \, \| \partial_{\nu} u_h \|_{L^2(\gamma)} \, \} \leq C(\epsilon) e^{[- \, d_E(\gamma) + \epsilon] \, /h }, \quad d_E(\gamma) = \min_{r \in \gamma} d_E(r)$$ 
for all $h \in (0, h_0(\epsilon)].$  On the other hand, if $H$ is admissible in sense of Definition \ref{admissible}, from Theorem \ref{thm2}, we have the lower bound
$$ \|u_h \|_{L^2(H)} \geq  C(\epsilon) e^{ [ \, - 2 \tau_0  d_E^H  + d_E(H) - \tilde{\beta}(\epsilon)  \, ]/h}, \quad d_E^H = \max_{q \in \Lambda_{E(H)} } d_E(q), \,\, \tilde{\beta}(\epsilon) = o(1).$$

Consequently, from (\ref{Forbidden}) we get that with $\tilde{\beta}(\epsilon) = o(1)$ as $\epsilon \to 0^+,$

\begin{eqnarray} \label{forbidden2}
 |F_h^{\C}(t)| \leq C(\epsilon)   e^{ [ \tilde{\beta}(\epsilon) + \epsilon] /h } \cdot e^{ [ \,2 \tau_0 d_E^H - d_E(H) - d_E(\gamma)  \, ]/h},
  \quad t \in H_{\rho^*}^{\C}.  \end{eqnarray}\\
 
 
 Finally, taking $\log$ of both sides of (\ref{forbidden2}) gives for  $h \in (0,h_0(\epsilon)],$

 \begin{equation} \label{logbound}
  h \,  \log |F_h^{\C}(t)| \leq  2 \tau_0 d_E^H - d_E(H)  - d_E(\gamma)+ \mu(\epsilon),
  \end{equation}\
  
\noindent with $\mu(\epsilon):= \tilde{\beta}(\epsilon) + \epsilon  \to 0^+$ as $\epsilon \to 0^+.$  A combination of (\ref{E: [TZ]}) and (\ref{logbound}) then proves the following
 

\begin{theo} \label{thm3}  Assume that $\dim M = 2$ and $(M,g,H,\gamma,V)$ are all real-analytic  and suppose that $H$ is an admissible hypersurface in sense of Definition \ref{admissible}. Then, under the control and monotonicity assumptions in Definitions \ref{control}  and \ref{monotonicity}, for any fixed $\epsilon >0,$ there are constants $\mu(\epsilon) >0$ and $h_0(\epsilon)>0$ such that for $h \in (0,h_0(\epsilon)],$


$$  \# \{ Z_{u_h} \cap H \} \leq C_H(\rho^*) \cdot \Big(  2  \tau_0 d_E^H - d_E(H)  - d_E(\gamma) + \mu(\epsilon)   \Big) \, h^{-1}.$$ 

\noindent Here, $ \mu(\epsilon)  \to 0^+$ as $\epsilon \to 0^+$ and $\tau_0, d_E^H, d_E(H)$ are the constants defined in (\ref{UPSHOT2}). In addition, $\rho^* >0$ is the tube radius  and $C_H(\rho^*)$ is the corresponding Dirichlet Green's function constant defined in (\ref{ch}).
\end{theo}

\begin{rem}{rem:D}{} (i)  We note that in Theorem \ref{thm3} above, the constant on the RHS given by    $C_H(\rho^*) \cdot \big(  2  \tau_0 d_E^H - d_E(H) - d_E(\gamma) + \mu(\epsilon)   \big) = C_H(\rho^*) \cdot \big(  2 ( \tau_0 d_E^H - d_E(H) )  + d_E(H) - d_E(\gamma) + \mu(\epsilon)   \big) >0.$ This follows since $ \tau_0 d_E^H -  d_E(H) \geq d_E^H -  d_E(H) \geq 0,  \, d_E(H) - d_E(\gamma) >0$ and the Green's function constant $C_H(\rho^*)>0.$  \\

\noindent (ii)  As we have pointed out above,  tube radius $\rho^*>0$  is determined directly from analytic continuation properties of the operator kernel $Q_{\gamma}^H(t,s,h)$ and, as such, depends on the analyticity properties of the data $(M,g,V,H,\gamma)$ in a complicated way (see also (iii) below). However, $\rho^*$ and consequently,  the Green's function constant $C_H(\rho^*),$ is nevertheless universal; that is, it is determined independent of the eigenfunction sequence $u_h.$  On the other hand, the part  of the nodal bound in (\ref{E: [TZ]}) that depends on the $u_h$'s   (i.e. $\max_{t \in H_{\rho*}^{\C}} \log |F_h^{\C}(t)|$ ) is bounded in (\ref{logbound})  rather explicitly in terms of the Agmon distance function.  \\

\noindent (iii)  It follows from \cite{CT} Theorem 4 that the bound in (\ref{qbound}) can be improved  to 
\begin{equation} \label{qbound2}
{\bf Q}(h,\rho^*) = O(e^{-C_1(\rho^*)/h})
\end{equation}
for some $C_1(\rho^*)>0.$ It follows from the holomorphic continuation properties  of the phase in (\ref{psdokernel}) that when $d_E(\gamma,H) = \inf_{(r,q) \in \gamma \times H} d_E(r,q)$ is sufficiently small, one can choose 

$$ \rho^* = \frac{1}{C_2} d_E^{\,2}(\gamma,H)$$

\noindent where $C_2>0$ is sufficiently large.  For such a choice of $\rho^*$, the constant in (\ref{qbound2}) can be taken to be 

$$ C_1(\rho^*) = \frac{1}{C_3} d_E^{\,2}(\gamma,H),$$

\noindent where $C_3>0$ is another constant.

However,  since 
$$ d_E(H) - d_E(\gamma) - \frac{1}{C_3} d_E^{\,2}(\gamma,H) \sim d_E(H) - d_E(\gamma)\quad  \text{as}  \,\,\, d_E(\gamma,H) \to 0^+,$$

\noindent (\ref{qbound2}) only gives a marginal improvement  in Theorem \ref{thm3}. Moreover, both constants $C_2$ and $C_3$ above depend in a complicated way on the data $(M,g,V,H)$ and are difficult to determine explicitly. Consequently, we decided to omit these considerations from the statement of Theorem \ref{thm3}.

\end{rem}




\bigskip

  \section{Eigenfunction control condition: examples} \label{counterexample}



\subsection{Counterexample:  lack of eigenfunction control} 
Here we show that without the control assumption in Definition \ref{control}, we can establish a  Schr\"odinger  model such that the corresponding eigenfunction decays much faster than $e^{-(1-\epsilon) d_E/h}$ in $A(\delta,\delta')$ for $\delta'$ small enough. Such counterexample is essentially inspired by the paper (\cite{CT}).

 Consider a convex surface of revolution generated by rotating a curve $ \gamma = \{ (z, f(z)), \, z \in [-1,1] \}$ about the $z$-axis where $  f\in C^\infty ( (-1,1), \R^+) \cap C^{0}( [-1,1]; \R^+),$ $f(1)=f(-1)=0$ and both $f(z) >0$ and $f''(z)<0$ for all $z\in (-1,1).$  
 Let $M$ be the corresponding convex surface of revolution given by the parametrization
 
 \begin{align*}
 &\beta: [-1,1]\times [0, 2\pi) \to \R^3, \\
 &\beta(z,\theta) = (f(z) \cos \theta, f(z) \sin \theta, z).
 \end{align*}

 In addition, we require here that 
    
 $$ f^2 \in C^{\infty}([-1,1];\R) \quad \text{and} \,\, \lim_{z \to \pm 1^{\mp}}  (f^2)'(z)\newline = 2 \lim_{z \to \pm 1^{\mp}}  f (z) f'(z) \neq 0.$$
  
The latter condition ensures that $M$ is smooth near the poles.  Indeed, since $f^2 \in C^{\infty}([-1,1]),$ by the Whitney extension theorem \cite{Wh}, it has a $ C^{\infty}$-extension in $(-1-\delta, 1 + \delta)$ for  $\delta >0.$ We abuse notation slightly and denote the extended function also by $f^2.$  Then, since $ x^2 + y^2 =  f^2(z),$ by an application of the implicit function theorem, it follows that if $(f^2)' (\pm 1) = 2 f f' (\pm 1) \neq 0,$ there exist  $F_{\pm} \in C^{\infty}_{loc}$ such that $z = F_{\pm}(x^2 + y^2)$ for $(x,y,z) \in M$ near the respective poles $(0,0,\pm1).$ In particular, $M$ is then locally a smooth graph over the $(x,y)$-plane near the poles.  Also, since $f^2(r) > 0$ for $r \in (-1,1)$ and $f^2(\pm 1) =0,$ it follows that $\mp (f^2)'(\pm 1) >0.$ 
 
 As an example,  in the case of the round sphere, $f(z) = \sqrt{1-z^2}, z \in [-1,1],$ so that $f^2(z) = 1- z^2$ and $ \lim_{z \to \pm 1^{\mp}}  f (z) f'(z) = \mp 1. $  Written in terms of standard spherical coordinates, $z = \cos \phi$ and $f(z) = \sin \phi,$ where $\phi \in [0,\pi]$ is the polar angle with the $z$-axis.

Then, $M$ inherits a Riemannian metric $g$ given by
 \[g=w^2(z)dz^2+f^2(z)d\theta^2,\]
where $w(z)=\sqrt{1+(f'(z))^2}$. 

Let $E \in \R$ be a regular value of $V$ and consider the Schr\"odinger equation on $M$ given by
\[(-h^2 \Delta_g + V)\phi_h=E(h)\phi_h,\]
where $V \in C^{\infty}(M)$ and is  axisymmetric,  so that $V(z,\theta)=V(z).$ We also assume that $E(h) = E + o(1)$  and that $ \{ V = E \}$ is disjoint from the poles $(0,0,\pm 1).$ 

 In the following, we will relabel the $z$-coordinate and write $r = z$ to indicate that this variable essentially measures distance into the  forbidden region $\{ V(r) > E \}.$

We seek eigenfunctions of the form $\phi_h(r,\theta)=v_h(r)\psi_h(\theta).$ These are joint eigenfunctions of the quantum integrable system given by the commuting operators $H = -h^2 \Delta_g + V(r)$ and $P = h D_{\theta}$ \cite{TZ2}. The Laplace operator in the coordinates $(r,\theta)$ has the following form 
\[ \Delta_g= \frac{1}{w(r)f(r)}\frac{\partial}{\partial r}\left( \frac{f(r)}{w(r)}\frac{\partial}{\partial r} \right)+  \frac{1}{f^2(r)} \frac{\partial^2}{\partial\theta^2}.\] 

It follows that  $\psi_h(\theta)$ must satisfy the ODE
\begin{equation} \label{tang}
- h_k^2 \frac{d^2}{d\theta^2}\psi_h(\theta)= h_k^2 \,m_{h_k}^2 \psi_h(\theta)
\end{equation}

Let $\{h_k\}$ be a decreasing sequence with $h_k\to 0^+$ as $k\to +\infty$ and $m_{h_k}=1/h_k \in \Z$. Then, we choose a particular sequence of solutions of \eqref{tang} given by
\[\psi_{h_k}(\theta)=e^{i m_{h_k} \theta}.\]

Then, $v_{h_k}(r)$  must satisfy

\begin{equation} \label{veqn}
-h_k^2 \, \frac{1}{f^{2}(r)}  \left( \frac{f(r)}{w(r)} \frac{d}{dr} \right)^2 v_{h_k}(r) + [ \,  ( V(r) - E ) +  \frac{1}{f^2(r)}  \, ]  \, v_{h_k}(r) = 0, \quad r \in (-1,1).\end{equation}

As for boundary conditions in (\ref{veqn}) (recalling that $r = z $), the eigefunctions are of the form
\begin{equation} \label{basicform}
 e^{im_{h_k} \theta} v_{h_k}(z) = (\cos \theta + i \sin \theta)^{m_{h_k}} \, v_{h_k}(z) = \frac{ (x+iy)^{m_{h_k}} }{ f(z)^{m_{h_k}}}  \,  v_{h_k}(z). \end{equation}
 
 
 Thus, in view of (\ref{basicform}), to ensure that the eigenfunctions $\phi_{h_k}(z,\theta) = v_{h_k}(z) \psi_{h_k}(\theta)$ are smooth up to the poles $z = \pm 1,$  the $v_{h_k}$  in (\ref{veqn}) must be of the form
 
 $$v_{h_k}(z) = f(z)^{m_{h_k}}  \cdot v_{h_k}^{reg} (z), \quad v_{h_k}^{reg} \in C^{\infty}([-1,1]).$$
 
 Thus, since $m_{h_k} \neq 0$ here ($m_{h_k} \to \infty$) and $f(\pm 1) = 0,$ it follows that the $v_{h_k}$ in (\ref{veqn})  satisfy the Dirichlet boundary conditions $v_{h_k}(\pm 1) = 0.$

 We note that this is precisely what happens in the case of standard spherical harmonics, where $V = 0, E =1, f(z)  = \sqrt{1-z^2}$ and the $v_{h_k}$ are the associated Legendre polynomials.

Next, we make the radial change of variables $s\to r(s)=\int_0^s\frac{f(\tau)}{w(\tau)} d\tau.$ In view of the assumption on profile function above (ie. $f^2 \in C^{\infty}[-1,1]$ and $ \mp (f^2)'(\pm 1) > 0),$ it follows that near $r=1,$  we have $f(r) \sim c_1 (1- r)^{1/2}$ and $f'(r) \sim - c_2 (1-r)^{-1/2}$ with $c_j >0, j=1,2.$ Thus,  
$ \frac{dr}{ds}  = \frac{f(r)}{\sqrt{ 1 + |f'(r)|^2} } \sim c_3 (1-r)$  and so,
$ s(r) \sim - \log (1-r)$ as $ r \to 1^-.$ Consequently, for the inverse function $r \mapsto s(r),$ we have $s(1) = \infty.$ Similar reasoning at $r = -1$ shows that $s(-1) = -\infty.$

Setting $\tilde{v}_{h_k}(s):= v_{h_k}(r(s)),$ the equation (\ref{veqn}) becomes


\begin{equation} \label{reduced}
\left(-h_k^2\frac{d^2}{ds^2}+f^2(r(s))(V(r(s))-E(h))+ 1 \right) \tilde{v}_{h_k}(s) =0, \quad s \in \R,
\end{equation} 
where  $\tilde{v}_{h_k}(\pm \infty) = 0.$ 

Consider the effective potential 

 $$V_{eff}(r):= f^2(r) ( V(r) - E ) + 1,$$\ 

\noindent where $r \in [-1,1].$ 


We assume here that $V(r=0) = E.$ Then, since $V_{eff}(0) = 1,$ it follows that there exists $\alpha >0$ and a corresponding annulus $A(-\alpha, \alpha)= \{r; -\alpha< r<  \alpha\}$ centered around $\{r=0 \} \subset \{ V=E\}$ disjoint from the poles,  so that

\begin{equation} \label{eff1}
V_{eff}(r) \geq \frac{1}{4}, \quad \forall r \in A(-\alpha,\alpha). \end{equation}

Consider the annulus $A(-\epsilon_0,  \epsilon_0) = \{r; -\epsilon_0< r<  \epsilon_0\}$ where $\epsilon_0 \ll \alpha.$ Then, in view of (\ref{eff1}), it then follows by the  standard Agmon-Lithner estimate applied to (\ref{reduced}) that for any $\delta >0,$



$$ \| e^{(1-\delta) \int_{ r^{-1}(-\alpha)}^{s} \sqrt{V_{eff}(r(s))}  \, ds / h_k } \, \tilde{v}_{h_k}(s) \|_{ L^2( A(r^{-1}(-\epsilon_0),r^{-1}(\epsilon_0) ) )} = O_{\delta}(1). $$
Making the change of variables $s \to  r(s)$ then gives

\begin{eqnarray}\label{counter0}
\| e^{(1-\delta) \int_{-\alpha}^{r} \frac{ \sqrt{V_{eff}(\tau)} }{\partial_s \tau} \, d\tau / h_k } \, v_{h_k}(r) \|_{L^2( A(-\epsilon_0,\epsilon_0) )} = O_{\delta}(1), \end{eqnarray}

In view of (\ref{eff1}) and the fact that $0<\partial_s r = \frac{f(r(s))}{w(r(s))}  \leq C_1$  for all $r(s) \in A(-\alpha,\alpha)$ follows from (\ref{counter0}) that

\begin{equation}\label{counter}
\|e^{ \frac{(1-\delta)}{2 C_1} \, (\alpha + r) / h_k} v_{h_k}(r)\|_{ L^2 (A(-\epsilon_0,\epsilon_0) )} = O_\delta(1).
\end{equation}\


But then,   for $r \in A(-\epsilon_0,\epsilon_0)$ with $\epsilon_0 >0$ sufficiently small  (i.e. $\epsilon_0 \ll \alpha$),
the inequality \eqref{counter} contradicts the control condition in Definition \ref{control}; indeed,  the eigenfunctions already decay exponentially in $h$ in the allowed region $A(-\epsilon_0, 0).$ 

  We note that  since $d_E(r) = O( |V(r) - E|^{3/2}) = O( \epsilon_0^{3/2})$ for $r \in A(0,\epsilon_0),$   it follows that for  $\epsilon _0 >0$ sufficiently small,  in the forbidden region where $r \in A(0, \epsilon_0),$
$$ \frac{1}{2C_1}(\alpha + r)     > \tau_0 d_E(r).$$

In this case, the exponential decay is therefore more pronounced than in Theorem \ref{thm1}. This is due to the presence of the effective potential term $m_k h_k \sim 1$ which in turn appears because of the particular choice of the sequence of Fourier modes in (\ref{tang}). This is consistent with our results, since as we have already shown, the control condition is violated for this particular sequence of eigenfunctions.
\subsection{Examples of eigenfunction sequences satisfying control}



We consider precisely the same example of a Schr\"{o}dinger operator on a convex surface of rotation as above but choose the quantum number $m = const. \neq 0$ so that $m h_k = O(h_k)$ as $h_k \to 0.$ Then, the ODE in (\ref{reduced}) becomes

\begin{equation} \label{reduced2}
\left(-h_k^2\frac{d^2}{ds^2}+f^2(r(s))(V(r(s))-E(h))+ O(h_k^2) \right)v_{h_k}(r(s))=0.
\end{equation}

The fact that the corresponding eigenfunctions $\phi_h(r,\theta) = v_h(r) \psi_h(\theta)$ satisfy the control assumption is then an immediate consequence of standard WKB theory applied to the semiclassical ODE (\ref{reduced2}). Indeed, writing $\Phi(r):= \int_{r_0}^{r}  \frac{f(r)}{\partial_s r} \,( E - V(r) )^{1/2} \, dr,$ it follows by WKB asymptotics that for $r \in [-1,1]$ satisfying $ E-2\epsilon < V(r) < E-\epsilon,$
\begin{equation} \label{wkb}
v_h(r) \sim_{h \to 0^+} e^{i\Phi(r)/h} c_1(h) \, a_1(r;h) + e^{-i\Phi(r)/h} c_2(h) a_{2}(r;h), \end{equation}
where for $k=1,2,$  $a_{k}(r;h) \sim \sum_{j=0}^{\infty} a_{k,j}(r) h^j$ and
$$|c_1(h)|^2 + |c_2(h)|^2 \geq C_1 >0, \quad |a_k(r;h)| \geq C_2(\epsilon) >0; \,\, k =1,2.$$
Consequently, from (\ref{wkb}) we get that for any $\epsilon >0,$
$$\int_{ - 2\epsilon < V(r) -E < -\epsilon} \int_{0}^{2\pi} |\phi_h(r,\theta)|^2 \, dr d\theta = \int_{ - 2\epsilon < V(r) -E < -\epsilon} \int_{0}^{2\pi} |v_h(r)|^2 \, |e^{im\theta}|^2 \, dr d\theta $$
$$ = \int_{ - 2\epsilon < V(r) -E < -\epsilon} \int_{0}^{2\pi} |v_h(r)|^2 \, dr  d\theta \geq C(\epsilon)>0.$$
In the last estimate, to control mixed terms, we have used that by an integration by parts,
$$ \int_{ - 2\epsilon < V(r) -E < -\epsilon} e^{\pm 2 i \Phi(r)/h} a_1(r;h) a_2(r;h) \, dr = O_{\epsilon}(h).$$
As a result, this particular sequence clearly satisfies the control assumption in Definition \ref{control} with $N=0.$


\end{document}